\newtheorem{Theorem}{Theorem}[section]
\newtheorem{Lemma}[Theorem]{Lemma}
\newtheorem{Corollary}[Theorem]{Corollary}
\theoremstyle{definition}
\newtheorem{Definition}[Theorem]{Definition}
\newtheorem{Remark}[Theorem]{Remark}
\newtheorem{Examples}[Theorem]{Examples}
\theoremstyle{definition}
\theoremstyle{remark}
\newcommand{\bib}{\bibitem}
\begin{document}

\renewcommand{\proofname}{Proof}

\title{Almost bi-Lipschitz embeddings and proper subsets of a Banach space - An extension of a Theorem by M.I. Ostrovskii}

\author{F. Netillard}
\address{Universit\'{e} de Franche-Comt\'{e}, Laboratoire de Math\'{e}matiques UMR 6623,
16 route de Gray, 25030 Besan\c{c}on Cedex, FRANCE.}
\email{francois.netillard2@univ-fcomte.fr}

%\date{\today}

%\subjclass[2010]{Primary 46B20; Secondary 46B80 }

%\keywords{Blum-Hanson, property $(m_p)$, property sub-$(m_p),AUS norm, Banach space, property $(M)$}

\maketitle

\begin{abstract}
Let $X$ and $Y$ be two infinite-dimensional Banach spaces. If $X$ is crudely finitely representable in every finite-codimensional subspace of $Y$, then any proper subset of $X$ almost bi-Lipschitz embeds into $Y$, in a sense quite close to that of F. Baudier and G. Lancien (see \cite{B} and \cite{BL}). This is an extension of a result proved by M.I. Ostrovskii for locally finite subsets \cite{O}.
\end{abstract}

\mbox{
\small{\textbf{Keywords:} Almost bi-Lipschitz, Banach space, embeddings, crudely finitely representable.}
}
\mbox{
\hspace{.4cm}\small{\textbf{MSC:} 46B20, 46B85.}
}

\markboth{}{}

\section{Introduction}

In order to follow this work, here are some essential definitions to know:

\begin{Definition}
Let $X$ and $Y$ be two infinite-dimensional Banach spaces, and $C>1$.\\
We say that $X$ is $C$-\textit{crudely finitely representable} in $Y$ (in short $C$-c.f.r) if for any finite-dimensional subspace $E$ of $X$, there is a linear isomorphism from $E$ to the space $T(E)\subset Y$ verifying $\|T\|\cdot\|T^{-1}\|<C$.\\
When the value of $C$ does not matter, we will simply say that $X$ is \textit{crudely finitely representable} in $Y$ (in short c.f.r).
\end{Definition}

\begin{Definition}
We say that $X$ is \textit{finitely representable} in $Y$ if, for all $\varepsilon > 0$, $X$ is $(1+\varepsilon)$-c.f.r in $Y$ for all $\varepsilon>0$.
\end{Definition}

Note that the concept of finite representability is due to R.C. James (\cite{J3} and \cite{J4}).

\begin{Definition}
Let $X$ and $Y$ be two infinite-dimensional Banach spaces.
We say that $X$ is \textit{c.f.r in the finite-codimensional subspaces of} $Y$ if there exists a constant $C>1$ such that, for all $Z \in \textnormal{cof}(Y)$, $X$ is $C$-c.f.r in $Z$.
\end{Definition}

\begin{Remark}
Note that in the above definition, the constant $C$ is {\sl uniform} on all finite-codimensional subspaces. By Proposition 4.1 of \cite{F}, c.f.r in $Y$ does not imply c.f.r in the finite-codimensional subspaces of $Y$.
\end{Remark}

An important question in nonlinear Banach space theory is: if a Banach space $X$ is crudely finitely representable in a Banach space $Y$, what are the similarities between subsets of $X$ and subsets of $Y$~?\\
%The typical problem of non linear Banach space theory is to decide if the metric structure of a Banach space determines its linear structure. Given some class $\mathcal{C}$ of non linear maps, we have therefore to decide which linear properties of Banach spaces are stable under all maps in $\mathcal{C}$.\\
The nonlinear maps that we will use to study these similarities here are embeddings. It should be noted that embeddings of discrete metric spaces into Banach spaces recently became an important tool in computer science and topology. A familiar class in nonlinear Banach space theory is constituted by the bi-Lipschitz embeddings which are metric isomorphisms between the domain space and the range. \\
%In recent years, it has become prevalent to investigate the embeddability of spaces that are "thicker" than locally finite metric spaces (that is, metric spaces in which each ball of finite radius has finite cardinality) into Banach spaces.\\
%The problem is that in this case a bi-Lipschitz embedding is usually unavailable. We need substitutes in order to make the embeddability as close to the bi-Lipschitz embeddability as possible.\\
It is known that, in such cases a Banach space $X$ which is crudely finitely representable in another Banach space, $Y$ does not have to admit a bi-Lipschitz embedding into $Y$ even if $X$ is separable. For example, $L_p[0,1]$ is finitely representable in $\ell_p$ (see, for instance, Theorem 6.2 and its proof from \cite{Fa}) but does not admit a bi-Lipschitz embedding into $\ell_p$, where $1\leq p<\infty$, $p \neq 2$ (this last statement is a consequence of Theorem 1.3 from \cite{H}). On the other hand, M.I. Ostrovskii \cite{O} proved that each locally finite subset of $X$ (that is subset metric spaces of $X$ in which each ball of finite radius has finite cardinality) whose finite subsets admit uniformly bi-Lipschitz embeddings admits a bi-Lipschitz embedding into $Y$.\\
Our interest is in "thicker" subsets, and more precisely in proper subsets of $X$, i.e metric subspaces of $X$ whoses closed balls are compact. For them, we cannot expect to get bi-Lipschitz embeddings due to a technical net argument which requires separation parameters, inspired by the work of F. Baudier and G. Lancien \cite{BL}.
Along these lines then the condition of almost bi-Lipschitz embeddability (introduced in \cite{BL}) happens to be useful. The definition given here will be slightly different from that of \cite{BL} but, in both cases, the notion of almost bi-Lipschitz embeddability expresses the fact that one can construct an embedding that is as close as one wishes to a bi-Lipschitz embedding.\\
%Under a condition of crude finite representability of a Banach space $X$ in each finite-codimensional subspace of another Banach space $Y$, we will show the almost bi-Lipschitz embeddability of the proper subsets of $X$ into $Y$ (see Theorem \ref{P872} below). The class of proper metric spaces is rather large since it contains all locally finite or compact metric spaces, but also all finite-dimensional Banach spaces and all compactly generated groups.
Two theorems (from \cite{BL} and \cite{O}) lie in the background of our main result.

${}$

First of all, we recall the theorem of M.I. Ostrovskii, already mentioned above \cite{O} (see also \cite{O2}).

\begin{Theorem}
Let $A$ be a locally finite metric space whose finite subsets admit uniformly bi-Lipschitz embeddings into a Banach space $X$. Then $A$ admits a bi-Lipschitz embedding into $X$.
\end{Theorem}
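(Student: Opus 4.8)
The plan is to upgrade the uniform finite embeddings into a single embedding by a gluing construction that exploits both the local finiteness of $A$ and the infinite-dimensionality of $X$. First I would normalize. Let $D$ be a common distortion bound and fix a base point $O\in A$. For every finite $F\subseteq A$ the hypothesis gives an embedding into $X$ of distortion $\le D$, and after rescaling and translating I may assume it is $1$-Lipschitz with $\|f_F(x)-f_F(y)\|\ge D^{-1}d(x,y)$ and $f_F(O)=0$. Using local finiteness I would then decompose $A$ into finite \emph{annuli} $S_k=\{x\in A:\rho_{k-1}\le d(x,O)<\rho_k\}$, where the radii grow geometrically, $\rho_{k+1}\ge\lambda\rho_k$ for a large constant $\lambda$ fixed at the end. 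Each ball $B_k=\{x:d(x,O)<\rho_k\}$ is finite, so it carries a distortion-$\le D$ embedding $f_{B_{k+1}}$ which in particular controls the geometry of the two \emph{consecutive} annuli $S_k\cup S_{k+1}$.

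Second, I would build $g\colon A\to X$ block by block. The image of each annulus $S_k$ is placed, via the finite embedding $f_{B_{k+1}}$, into its own ``slot'' of $X$, and distinct slots are pushed into norm-separated (essentially independent) directions, which is possible precisely because $X$ is infinite-dimensional. Concretely I would set $g$ on $S_k$ to be the finite embedding of that block followed by a translation by a vector $v_k$ living in a fresh part of $X$ with $\|v_k\|$ of order $\rho_k$, chosen so that the within-block and adjacent-block geometry coming from $f_{B_{k+1}}$ is preserved while the images of annuli $S_j,S_k$ with $|j-k|\ge 2$ cannot interfere.

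The distortion estimate then splits according to where two points $x,y$ sit. If $x,y$ lie in the same or in adjacent annuli, their images are governed by a single finite embedding of distortion $\le D$, so $D^{-1}d(x,y)\le\|g(x)-g(y)\|\le d(x,y)$ up to the fixed constants of the construction. If $x\in S_j$ and $y\in S_k$ with $k\ge j+2$, then the geometric growth of the radii forces $d(x,y)$ to be comparable to $\rho_k$ (indeed $d(x,y)\ge\rho_{k-1}-\rho_{k-2}$ from below and $d(x,y)\le\rho_j+\rho_k$ from above), while the separation of the slots forces $\|g(x)-g(y)\|$ to be comparable to $\|v_k\|\approx\rho_k$ as well; hence the ratio stays bounded independently of $j,k$. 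Collecting the two cases yields a global distortion depending only on $D$ and $\lambda$, which is the desired bi-Lipschitz embedding of $A$ into $X$.

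I expect the gluing step to be the main obstacle: one must reconcile the \emph{different} finite embeddings used on overlapping ranges of annuli so that the global map is simultaneously Lipschitz and co-Lipschitz across block boundaries, while keeping the unavoidably crude cross-block estimates under control. This is exactly where infinite-dimensionality is indispensable, since there is enough room in $X$ to send each block into an essentially independent direction, and where the geometric choice of the $\rho_k$ is needed, so that the coarse control between distant blocks is absorbed by the large true distances between them. A naive alternative—taking a pointwise weak$^{\ast}$ limit of the $f_F$ in $X^{\ast\ast}$ along an ultrafilter—delivers the upper Lipschitz bound for free by weak$^{\ast}$ lower semicontinuity of the norm, but it destroys the lower bound and, more seriously, only lands in $X^{\ast\ast}$; the whole point of the construction is to remain inside $X$ itself.
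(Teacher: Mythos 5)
First, a point of comparison: the paper you were given does not prove this statement at all. It is Ostrovskii's theorem, quoted as background and cited from \cite{O}; the paper's own contribution (Theorem 1.4) is a different result, for proper subsets, under the strictly stronger hypothesis of crude finite representability in all finite-codimensional subspaces. The standard proof of the statement combines two ingredients: (i) an ultrapower limit --- setting $f(a)=\big(f_i(a)\big)_{\mathcal{U}}$ for the given embeddings $f_i$ of an increasing sequence of balls yields a bi-Lipschitz embedding of $A$ into $X^{\mathcal{U}}$, since $\|f(a)-f(b)\|=\lim_{\mathcal{U}}\|f_i(a)-f_i(b)\|$ preserves \emph{both} bounds (unlike the weak$^{*}$ limit you rightly discard); and (ii) the fact that $X^{\mathcal{U}}$ is finitely representable in $X$, so that Theorem 1.2 of the paper (the Baudier--Lancien-type gluing result for locally finite subsets) applies. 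Your proposal instead attempts a direct gluing inside $X$, so it must solve from scratch exactly the problem that those proofs are engineered to solve.

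And that is where it has a genuine gap, at the very step you flag as ``the main obstacle'': the mechanism you propose --- translating the image of each annulus $S_k$ by a vector $v_k$ in a fresh direction with $\|v_k\|\approx\rho_k$ --- makes the map fail to be Lipschitz at the annulus boundaries. Take $x\in S_k$ and $y\in S_{k+1}$ on either side of the sphere of radius $\rho_k$ with $d(x,y)$ tiny; local finiteness gives no lower bound on $d(x,y)$ that is uniform relative to $\rho_k$. Your map sends them to $f_{B_{k+1}}(x)+v_k$ and $f_{B_{k+2}}(y)+v_{k+1}$, where $\|v_{k+1}-v_k\|$ is of order $\rho_{k+1}$ (the $v_k$ are nearly independent directions) and the two finite embeddings $f_{B_{k+1}},f_{B_{k+2}}$ are unrelated maps, so nothing cancels that term: $\|g(x)-g(y)\|\approx\rho_{k+1}$ while $d(x,y)$ is arbitrarily small. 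Accordingly, your claim that points in adjacent annuli ``are governed by a single finite embedding'' is false for the map as defined: $f_{B_{k+1}}$ does control the metric of $S_k\cup S_{k+1}$, but $g$ does not use $f_{B_{k+1}}$ on $S_{k+1}$. (A secondary, fixable issue: in the case $|j-k|\ge 2$, to get $\|g(x)-g(y)\|$ of order $\rho_k$ you need $v_k$ almost orthogonal not just to the other slots but also to the span of $f_{B_{k+1}}(B_{k+1})$, since $\|f_{B_{k+1}}(y)\|$ can itself be of order $\rho_k$ and could nearly cancel $v_k$; Mazur's technique permits such a choice.) The crux of the theorem is precisely how to reconcile consecutive embeddings across boundaries: Baudier--Lancien and the present paper do it by barycentric interpolation $\lambda_x f_k(x)+(1-\lambda_x)f_{k+1}(x)$, which restores the Lipschitz bound but then forces them to recover the compression bound through projections onto blocks of an FDD-type decomposition (available there because of hypotheses on $Y$, not available for a general $X$ here); Ostrovskii's route avoids the issue via the ultrapower reduction above. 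Your proposal contains neither device, so as it stands it does not prove the theorem.
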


\noindent This theorem is quite close to the following result \cite{BL}, where the notion of crude finite representability shows up.

\begin{Theorem}\label{T111}
Let $X$ and $Y$ be two Banach spaces such that $X$ is crudely finitely representable in $Y$.\\
Let $A\subset X$ such that $A$ is locally finite.\\
Then $A$ admits a bi-Lipschitz embedding into $Y$.
\end{Theorem}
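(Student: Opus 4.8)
The plan is to obtain this statement as a consequence of Ostrovskii's theorem (Theorem~1.1): it suffices to check that the linear hypothesis of crude finite representability produces \emph{uniformly} bi-Lipschitz embeddings of the finite subsets of $A$ into $Y$, after which Theorem~1.1 does the rest.

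First I would unwind the hypothesis. Saying that $X$ is finitely crudely representable in $Y$ means there is a constant $\lambda \ge 1$ such that every finite-dimensional subspace $E \subseteq X$ admits a linear isomorphism $T_E \colon E \to T_E(E) \subseteq Y$ onto its image with $\|T_E\|\,\|T_E^{-1}\| \le \lambda$. The key elementary observation is that such a linear isomorphism is automatically a bi-Lipschitz embedding with distortion at most $\lambda$: for all $x,y \in E$,
\[
\|T_E^{-1}\|^{-1}\,\|x-y\| \le \|T_E(x)-T_E(y)\| = \|T_E(x-y)\| \le \|T_E\|\,\|x-y\|.
\]

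Next I would fix an arbitrary finite subset $F \subseteq A$ and set $E = \spa(F)$, a finite-dimensional subspace of $X$ carrying the metric inherited from the norm of $X$. Applying the hypothesis to $E$ and restricting $T_E$ to $F$ yields, by the displayed inequality, a bi-Lipschitz embedding of $F$ into $Y$ whose distortion is at most $\lambda$. Since $\lambda$ is a single constant that does not depend on the choice of $F$, the finite subsets of $A$ admit uniformly bi-Lipschitz embeddings into $Y$. As $A$ is locally finite, Theorem~1.1 now applies and delivers a bi-Lipschitz embedding of $A$ into $Y$.

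The reduction itself is routine; essentially all of the difficulty is absorbed into Theorem~1.1, which we are entitled to assume. If instead one sought a self-contained argument not invoking Ostrovskii's result, the genuine obstacle would be the amalgamation step: given the separate embeddings of an exhausting increasing sequence of finite subsets of $A$, one must splice them into a single embedding of the whole space while simultaneously controlling the Lipschitz and co-Lipschitz constants uniformly. That gluing is precisely the hard content of Theorem~1.1, so here it comes for free.
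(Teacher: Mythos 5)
Your reduction is correct: for a finite $F\subseteq A$, applying the f.c.r.\ hypothesis to the finite-dimensional subspace $E=\operatorname{span}(F)$ gives a linear isomorphism $T_E$ onto its image, which is automatically bi-Lipschitz with distortion at most $\|T_E\|\,\|T_E^{-1}\|\le\lambda$; hence all finite subsets of $A$ embed into $Y$ with uniformly bounded distortion, and Ostrovskii's Theorem~1.1 finishes the job. You should know, however, that the paper contains no proof of Theorem~\ref{T111} at all: it is background quoted from \cite{BL}, and the sentence introducing it (Theorem~1.1 ``is an improvement of the following result'') asserts exactly the implication you verified. So your proposal reconstructs the logical relationship the paper states rather than matching a proof the paper gives. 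The original Baudier--Lancien argument necessarily took a different, direct route, since it predates Ostrovskii's theorem: discretize $A$ by nets, use f.c.r.\ to copy the relevant finite-dimensional spans into finite-dimensional subspaces of $Y$, arrange those into an FDD with well-controlled projections via the Mazur technique (the role played here by Lemma~\ref{L358}), and glue the pieces while keeping both Lipschitz bounds under control. That construction is longer but self-contained, and it is precisely the template the present paper adapts to prove its main result, Theorem~\ref{P872}, where local finiteness is relaxed to properness and the conclusion is weakened to almost bi-Lipschitz embeddability; your route is shorter but outsources all of the difficulty to the later, stronger theorem of Ostrovskii.
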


%Local finiteness is a quite restrictive assumption, and it is natural to wonder whether a similar general result on embeddings exists for a larger class of metric spaces. For this purpose, we need the definition below.

%\begin{Definition}
%A metric space is said to be {\sl proper} if all its closed balls are compact.
%\end{Definition}

%It is plain that locally finite metric spaces are proper, and
Theorem \ref{T111} yields to the conjecture that any proper subset of a Banach space $X$ almost bi-Lipschitz embeds into another Banach space $Y$ as soon as $X$ is crudely finitely representable in $Y$.
This motivated our work (which is an improvement of Theorem 4.2.3 from \cite{Ne}).
In connection with it, F. Baudier et G. Lancien proved in \cite{BL}, among other results, that, for $p \in [1,+\infty]$ and $Y$ a Banach space which contains uniformly the $\ell_p^n$ spaces, every proper subset $M$ of $L_p$ almost bi-Lipschitzly embeds into $Y$.
Baudier-Lancien's proof of this theorem provides a pattern for the  proof of Theorem \ref{P872} below, which is the main result of this paper.

For this Theorem, we give the following definition of an almost bi-Lipschitz embedding:

\begin{Definition}
Let $(M,d)$ and $(N,\delta)$ be two metric spaces.\\
$M$ is almost bi-Lipschitz embeddable into $N$ if for any $A \in (0,\infty)$, there exist a scaling factor $r \in (0,\infty)$ and a constant $D \in [1,\infty)$ such that for any continuous function $\varphi: [0,\infty) \rightarrow [0,1)$ with $\varphi(0)=0$ and $\varphi(t) >0~ \forall t>0$, there exists a map $f_{\varphi}: M \rightarrow N$ such that:
$$\forall~x,y \in M,~
\varphi(d(x,y))rd(x,y) \leq \delta(f_{\varphi}(x),f_{\varphi}(y)) \leq Drd(x,y)+A.$$
\end{Definition}

\begin{Theorem}\label{P872} Let $X$ and $Y$ be two Banach spaces.\\
Assume that $X$ is crudely finitely representable in the finite-codimensional subspaces of $Y$.
Let $M$ be a proper subset of $X$. Then $M$ almost bi-Lipschitz embeds into $Y$.
\end{Theorem}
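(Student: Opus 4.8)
The plan is to follow the Baudier--Lancien scheme, replacing the concrete finite-dimensional decomposition of $L_p$ by the finite-dimensional subspaces of $X$ that a proper subset naturally produces, and replacing ``$Y$ contains the $\ell_p^n$ uniformly'' by the crude finite representability hypothesis. Fix a basepoint; after a translation we may assume the basepoint is the origin $0\in M$, so that $d(x,0)=\|x\|$. Because $M$ is proper, each closed set $B(0,2^{k+1})\cap M$ ($k\in\mathbb{Z}$) is compact, hence for a summable sequence of tolerances $\delta_k\downarrow 0$ we may choose finite $\delta_k$-nets $N_k$ of the dyadic annuli $\{x\in M:\,2^{k}\le\|x\|<2^{k+1}\}$. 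The union of the nets is a countable dense subset $D$ of $M$, and $\spa\big(\bigcup_{k\le n}N_k\big)$ is an increasing sequence of finite-dimensional subspaces of $X$; it then suffices to define an almost bi-Lipschitz map on $D$ and extend it by uniform continuity to $\overline{D}=M$. Recall that almost bi-Lipschitz in the sense of \cite{BL} means Lipschitz together with a lower estimate that is genuinely bi-Lipschitz up to a logarithmic factor in the ratio of the ambient radius to the separation.

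The core is an inductive construction of the target. I would build a nested sequence of finite-codimensional subspaces $Y=Z_0\supseteq Z_1\supseteq\cdots$ together with linear maps realising the crude representability of each new finite-dimensional increment inside the available finite-codimensional room. Concretely, at stage $n$ one applies the hypothesis to embed the finite-dimensional subspace carrying the scale-$2^{n}$ net data into $Z_{n-1}$ with distortion at most the uniform constant $C$; taking finitely many norming functionals for the images already placed and passing to their common kernel produces a further finite-codimensional $Z_n$, and a Mazur-type almost-orthogonality argument (available precisely because the representability holds in \emph{every} finite-codimensional subspace) then makes the successive blocks $W_1,W_2,\dots$ a finite-dimensional decomposition of their closed span, with decomposition constant bounded independently of $n$. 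The global map $f$ on $D$ is assembled as a telescoping sum $f(x)=\sum_k g_k(x)$, where each $g_k$ takes values in $W_k$ and is obtained from the scale-$2^{k}$ net-approximation embeddings smoothed by a Lipschitz partition of unity in the radial variable $\|x\|$, so that $\|g_k(x)-g_k(y)\|\lesssim C\min\!\big(d(x,y),\,2^{k}\big)$.

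The upper estimate is then routine: only boundedly many blocks $g_k$ change by as much as $d(x,y)$, and the remaining blocks are controlled by a geometric tail, whence $\|f(x)-f(y)\|\le C'\,d(x,y)$. For the lower estimate, the uniform decomposition constant lets one isolate the block $g_{k_0}$ at the scale $2^{k_0}\approx d(x,y)$, on which $f$ is genuinely bi-Lipschitz; the loss arises because separating that block from the others costs a factor proportional to the number of scales lying between $d(x,y)$ and $\max(\|x\|,\|y\|)$, which is of order $\log_2\!\big(\max(\|x\|,\|y\|)/d(x,y)\big)$. This is exactly the logarithmic degradation permitted in the definition of almost bi-Lipschitz, and it is the reason the conclusion cannot be full bi-Lipschitz for a merely proper set.

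The main obstacle, and the step where the strengthened hypothesis is essential, is the inductive selection of the subspaces $Z_n$: one must simultaneously keep the per-block distortion bounded by the single constant $C$ and force genuine independence of the blocks with a decomposition constant that does not deteriorate as $n\to\infty$. Finite crude representability in $Y$ alone, as in Theorem~\ref{T111}, suffices to embed each individual net but gives no room to separate the images across the infinitely many scales forced by the accumulation points of a proper (non locally finite) set; it is representability in \emph{arbitrary} finite-codimensional subspaces that supplies this room at every stage with a uniform constant. Once the finite-dimensional decomposition structure is in place, the passage from $D$ to $M$ and the verification of the two moduli are routine, so the proof reduces to making the block-selection estimates precise.
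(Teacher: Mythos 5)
The decisive flaw is your reading of the definition of almost bi-Lipschitz embeddability, and it propagates through the whole construction. In \cite{BL} (and in this paper) the definition is: there exist a scaling factor $r>0$ and a constant $D\ge 1$ such that for \emph{every} gauge $\varphi\in\Phi$ there is a map $f_\varphi\colon M\to Y$ with $\omega_{f_\varphi}(t)\le Drt$ and $\rho_{f_\varphi}(t)\ge rt\varphi(t)$ for all $t>0$. Two features matter: the map may depend on the prescribed $\varphi$, and the compression bound is a \emph{modulus}, i.e.\ it must hold uniformly over all pairs with $\|x-y\|\ge t$, so the admissible loss is a function of the distance $t$ alone, chosen in advance (and it can be taken as mild as one wishes). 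What you propose instead is a single map $f$ whose lower estimate loses a factor of order $\log_2\bigl(\max(\|x\|,\|y\|)/\|x-y\|\bigr)$; this loss depends on the position of the pair, not only on $\|x-y\|$. For an unbounded proper set $M$ (e.g.\ one containing a ray), fix $t$ and let $\|x\|,\|y\|\to\infty$ along pairs with $\|x-y\|=t$: your lower bound tends to $0$, so $\rho_f(t)=0$ for every $t$, and $f$ satisfies no estimate of the form $\rho_f(t)\ge rt\varphi(t)$ with $\varphi\in\Phi$. Your sentence ``this is exactly the logarithmic degradation permitted in the definition'' is therefore incorrect; what you outline is a genuinely weaker, radius-dependent statement, not Theorem \ref{P872}.

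The missing mechanism, which is the heart of the paper's proof, is a second, $\varphi$-dependent index of net precisions inside \emph{each} dyadic ball. Using Lemma \ref{L123} one writes $\varphi\le 2^{\mu}$ and lets $\sigma$ be the generalized inverse of $\mu$; each ball $B_k=\{x\in M:\|x\|\le 2^{k+1}\}$ carries nets $G_{k,n}$ at \emph{all} precisions $\varepsilon_{k,n}=\min(2^{-k},2^k)\,\min(\sigma(-n)/\eta,1)$, $n\in\mathbb{N}$, and the map on $B_k$ is the weighted sum $f_k=\sum_{n}2^{-n}R_{k,n}\circ\varphi_{k,n}$ over all these precisions, where the isomorphisms $R_{k,n}$ and the FDD with projection constant $1+\gamma$ come from Lemma \ref{L358} (your Mazur/finite-codimension step is the one sound part of the outline and matches that lemma, as does the radial barycentric gluing). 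In the lower bound one does not pay for the scales between $\|x-y\|$ and $\max(\|x\|,\|y\|)$: given $x,y$ one picks the single index $n$ with $\sigma(-(n+1))\le\|x-y\|<\sigma(-n)$ and applies the FDD projections onto the at most four blocks of precision index $n+1$ attached to the annuli of $x$ and $y$; there the net error is at most $\sigma(-(n+1))/\eta\le\|x-y\|/\eta$ and the weight is $2^{-(n+1)}\ge\frac{1}{2}2^{\mu(\|x-y\|)}\ge\frac{1}{2}\varphi(\|x-y\|)$, which yields $\|f(x)-f(y)\|\ge c\,\varphi(\|x-y\|)\,\|x-y\|$ with $c$ depending only on $C$, $\eta$, $\gamma$ --- uniformly in the position of $x$ and $y$. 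With one net per annulus, as in your plan, no choice of tolerances $\delta_k$ can achieve this for every prescribed $\varphi$; and, incidentally, the union of one finite net per annulus is not dense in $M$, so your step of extending from $D$ to $\overline{D}=M$ by uniform continuity also fails.
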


\section{Banach spaces containing the $\ell_p^n$'s uniformly}

%\subsection{Crudely finite representability in the finite-codimensional subspaces}

%\subsection{Almost bi-Lipschitz embeddings }

%Almost bi-Lipschitz embedding are defined in \cite{BL} and in \cite{B}.
%Let $(M,d)$ and $(N,\delta)$ be two metric spaces, and $f$ a map from $M$ into $N$. Let
%$$\rho_f(t) = \textnormal{inf}\{\delta(f(x),f(y)): d(x,y) \geq t\}~ (\textnormal{compression modulus of } f),$$
%and
%$$\omega_f(t) = \textnormal{sup}\{\delta(f(x),f(y)): d(x,y) \leq t\}~ (\textnormal{expansion modulus of } f).$$
%Denote $\Phi=\{\varphi:[0,\infty) \rightarrow [0,1)$ continuous such that $\varphi(0)=0$ and $\varphi(t)>0$~~$\forall t>0\}$.\\

%We say that $M$ almost bi-Lipschitzly embeds into $N$ if there exists a scaling factor $r \in (0,\infty)$, a constant $D$, and a family $(f_{\varphi})_{\varphi \in \Phi}$ of maps from $M$ into $N$ such that, for all $t \in (0,\infty)$, $$\omega_{f_{\varphi}}(t) \leq Drt$$ and $$\rho_{f_{\varphi}}(t) \geq rt\varphi(t).$$
%In other words, $M$ is almost bi-Lipschitz embeddable into $N$ if there exist a scaling factor $r \in (0,\infty)$ and a constant $D \in [1,\infty)$ such that for any continuous function $\varphi \in \Phi$, there exists a map $f_{\varphi}: M \rightarrow N$ such that:
%$$\forall~x,y \in M,~
%\varphi(d(x,y))rd(x,y) \leq \delta(f_{\varphi}(x),f_{\varphi}(y)) \leq Drd(x,y).$$

%\subsection{Banach spaces containing the $\ell_p^n$'s uniformly}
%${}$

The definitions below are classical items from the local theory of Banach spaces.

\begin{Definition}
Let $\lambda \in [1,\infty)$ and $p \in [1,\infty]$.\\
We say that a Banach space contains $\lambda$-uniformly the $\ell_p^n$'s if there exists a sequence of subspaces $(E_n)_{n=1}^{\infty}$ of $X $ such that $E_n$ is linearly isomorphic to $\ell_p^n$ with
\begin{center}$d_{BM}(E_n,\ell_p^n)\leq \lambda$,\end{center}
where $d_{BM}$ is the Banach-Mazur distance between Banach spaces.
\end{Definition}

\begin{Definition}
Let $p \in [1,\infty]$.\\ We say that $X$ contains the $\ell_p^n$ 's uniformly if it contains $\lambda$-uniformly the $\ell_p^n$'s for some value $ \lambda \in [1,\infty)$.
\end{Definition}

\section{Main results}

The following lemma is shown in \cite{BL}.

\begin{Lemma}\label{L123} For any continuous function $\varphi:[0,+\infty)\rightarrow [0,1)$ such that $\varphi(0)=0$ and $\varphi(t)>0$ for all $t>0$, there exists a continuous non-decreasing surjective function $\mu:(0,+\infty)\rightarrow (-\infty,0)$, so that $\varphi(t)\leq 2^{\mu(t)}$ for all $t>0$.
\end{Lemma}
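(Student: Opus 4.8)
The plan is to pass to logarithms and reduce the statement to the construction of a continuous non-decreasing surjection dominating a given continuous function. Since $\varphi(t)\in(0,1)$ for every $t>0$, the function $h(t):=\log_2\varphi(t)$ is well defined, continuous, and strictly negative on $(0,\infty)$; because $\varphi$ is continuous with $\varphi(0)=0$, we have $h(t)\to-\infty$ as $t\to 0^+$. The desired inequality $\varphi(t)\le 2^{\mu(t)}$ is equivalent to $\mu(t)\ge h(t)$, so it suffices to produce a continuous non-decreasing surjection $\mu:(0,\infty)\to(-\infty,0)$ with $\mu\ge h$; the conclusion then follows by applying the increasing map $x\mapsto 2^x$.

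First I would build a non-decreasing majorant by taking the running supremum $\widetilde\mu(t):=\sup_{0<s\le t}h(s)$. Using that $h\to-\infty$ at $0^+$, one checks that this supremum is in fact a maximum over a compact subinterval $[\delta_t,t]$, hence finite and strictly negative; thus $\widetilde\mu$ is well defined, non-decreasing, and satisfies $\widetilde\mu\ge h$ and $\widetilde\mu<0$. The running supremum of a continuous function is continuous (a standard $\varepsilon$-argument using continuity of $h$ gives both left- and right-continuity), and the localization above shows $\widetilde\mu(t)\to-\infty$ as $t\to 0^+$.

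The main obstacle is surjectivity at the right end: being non-decreasing and bounded above by $0$, the function $\widetilde\mu$ has a limit $L\le 0$ at $+\infty$, but $L$ may be strictly negative (for instance when $\varphi$ is bounded away from $1$), so $\widetilde\mu$ alone need not hit every value of $(-\infty,0)$. I would repair this by setting $\mu(t):=\max\!\big(\widetilde\mu(t),\,-1/t\big)$. The auxiliary function $t\mapsto -1/t$ is a continuous increasing bijection of $(0,\infty)$ onto $(-\infty,0)$, so taking the pointwise maximum keeps $\mu$ continuous, non-decreasing, strictly negative, and still $\ge h$ (indeed $\mu\ge\widetilde\mu\ge h$); moreover $\mu\to-\infty$ as $t\to 0^+$ (both terms do) while $\mu\to 0^-$ as $t\to+\infty$ (forced by the $-1/t$ term, by a squeeze against the upper bound $0$).

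Finally, a continuous function on $(0,\infty)$ whose limits are $-\infty$ at $0^+$ and $0^-$ at $+\infty$ is, by the intermediate value theorem, a surjection onto $(-\infty,0)$: continuity forbids skipping values, and the flat stretches produced by the maximum are harmless. Hence $\mu$ has all the required properties, and $2^{\mu(t)}\ge 2^{h(t)}=\varphi(t)$ for all $t>0$, which completes the proof.
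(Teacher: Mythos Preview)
Your argument is correct: passing to $h=\log_2\varphi$, forming the running supremum $\widetilde\mu$, and then forcing surjectivity at the right end via $\mu=\max(\widetilde\mu,-1/t)$ yields a continuous non-decreasing surjection of $(0,\infty)$ onto $(-\infty,0)$ dominating $h$, hence $\varphi\le 2^{\mu}$. Each step checks out, including the continuity of the running supremum and the limiting behaviour at both ends.

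As for comparison with the paper: the paper does not actually prove Lemma~\ref{L123}; it simply records the statement and attributes the proof to \cite{BL}. So there is no in-paper argument to weigh yours against. Your construction is one natural way to carry this out and is essentially self-contained; the only stylistic remark is that you could shorten the exposition by noting directly that $\mu(t):=\max\big(\sup_{0<s\le t}\log_2\varphi(s),\,-1/t\big)$ works, since all the claimed properties follow in a line or two once this formula is written down.
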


We will need the next lemma, inspired by Lemma 2.6 from \cite{BL}, to establish Theorem \ref{P872}. For this lemma and what follows, $d_{BM}$ denotes the Banach-Mazur distance, FDD is an abbreviation for finite-dimensional decomposition, and Span is a notation for linear span.

\begin{Lemma}\label{L358} Let $X$ and $Y$ be two Banach spaces. We assume that $X$ is f.c.r in the finite-codimensional subspaces of $Y$.\\
Let $\gamma > 0$, $(F_j)_{j=1}^{\infty}$ be a family of finite dimensional subspaces of $X$.\\
Then there exists a family $(H_j)_{j=1}^{\infty}$ of finite dimensional subspaces of $Y$ such that, for all $j$, $d_{BM}(H_j,F_j ) \leq C$ and $(H_j)_{j=1}^{\infty}$ is an FDD of $Z=\overline{\textnormal{Span}}(\bigcup_{j=1}^{\infty }H_j) \subseteq Y$ with $\|P_j\| \leq 1+\gamma$ where $P_j$ is the projection of $Z$ onto $H_1\oplus \cdots \oplus H_j$ whose kernel is $\overline{\textnormal{Span}}(\bigcup_{i= j+1}^{\infty}H_i)$.
\end{Lemma}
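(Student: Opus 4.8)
The plan is to build the subspaces $H_j$ together with a decreasing sequence $W_0 \supseteq W_1 \supseteq W_2 \supseteq \cdots$ of finite-codimensional subspaces of $Y$ by induction: the finite-codimensional f.c.r. hypothesis produces each $H_j$ \emph{inside} the current $W_{j-1}$ with control on $d_{BM}(H_j,F_j)$, while a Hahn--Banach almost-complementation device forces the tail of every FDD-expansion to lie in the appropriate $W_j$, thereby turning pointwise estimates into a uniform projection bound.

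First I would record the standard perturbation lemma: for every finite-dimensional subspace $G\subseteq Y$ and every $\delta>0$ there is a finite-codimensional subspace $W\subseteq Y$ with $\|g\|\le(1+\delta)\|g+w\|$ for all $g\in G$ and $w\in W$. To prove it, pick a sufficiently fine net $(g_i)_{i=1}^m$ of the unit sphere of $G$, use Hahn--Banach to choose norming functionals $\phi_i\in Y^{*}$ with $\|\phi_i\|=1$ and $\phi_i(g_i)=1$, and set $W=\bigcap_{i=1}^m\ker\phi_i$; evaluating a suitable $\phi_i$ on $g+w$, with $g_i$ close to $g/\|g\|$, yields the estimate.

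Then I carry out the induction. Set $W_0=Y$ and fix $\delta_j\in(0,\gamma]$ for every $j$. At step $j$, since $W_{j-1}$ is finite-codimensional, the hypothesis that $X$ is $C$-f.c.r.\ in every finite-codimensional subspace of $Y$ supplies an isomorphism $T_j\colon F_j\to H_j:=T_j(F_j)\subseteq W_{j-1}$ with $\|T_j\|\,\|T_j^{-1}\|<C$, so $d_{BM}(H_j,F_j)\le C$. Put $G_j=H_1\oplus\cdots\oplus H_j$; the sum is direct because the estimate obtained at the previous step forces $G_{j-1}\cap W_{j-1}=\{0\}$ while $H_j\subseteq W_{j-1}$. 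Applying the perturbation lemma to $G_j$ with $\delta_j$ gives a finite-codimensional $W'\subseteq Y$, and I set $W_j=W'\cap W_{j-1}$, which is again finite-codimensional, satisfies $W_j\subseteq W_{j-1}$, and obeys $\|g\|\le(1+\delta_j)\|g+w\|$ for all $g\in G_j$, $w\in W_j$.

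Finally I verify the FDD property with the stated constant. Let $P_j$ be the projection of $Z$ onto $G_j$ with kernel $\overline{\textnormal{Span}}(\bigcup_{i>j}H_i)$. For $z=\sum_{k=1}^N h_k$ with $h_k\in H_k$, the tail $z-P_jz=\sum_{k>j}h_k$ lies in $W_j$, since each $H_k$ with $k>j$ sits inside $W_{k-1}\subseteq W_j$ by the nesting; hence the step-$j$ estimate gives $\|P_jz\|\le(1+\delta_j)\|z\|\le(1+\gamma)\|z\|$. The uniform bound $\sup_j\|P_j\|\le 1+\gamma$ together with the density of $\bigcup_j G_j$ in $Z$ shows that every $z\in Z$ equals its convergent expansion $\sum_k(P_k-P_{k-1})z$ and that this expansion is unique, so $(H_j)_{j=1}^{\infty}$ is an FDD of $Z$ with $\|P_j\|\le 1+\gamma$. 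The point requiring the most care is the perturbation lemma, and the main idea is that nesting the $W_j$ makes the \emph{entire} tail of any expansion land in $W_j$, which is exactly what converts the pointwise almost-complementation estimates into a uniform FDD constant; the finite-codimensional f.c.r.\ hypothesis is used precisely to keep placing the next $H_j$ inside the shrinking $W_{j-1}$ while holding $d_{BM}(H_j,F_j)$ below the fixed constant $C$.
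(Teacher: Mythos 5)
Your proof is correct and follows essentially the same route as the paper's: an induction alternating the Mazur almost-complementation technique (your perturbation lemma) with the uniform f.c.r.\ constant $C$ on finite-codimensional subspaces to place each $H_j$ inside a nested finite-codimensional $W_{j-1}$. The only difference is bookkeeping: the paper accumulates the projection bound as a product $\prod_j(1+\gamma_j)\le 1+\gamma$ by iterating one-step estimates, whereas you apply the perturbation lemma to the whole partial sum $H_1\oplus\cdots\oplus H_j$ at each stage and use the nesting $H_k\subseteq W_{k-1}\subseteq W_j$ for $k>j$ to bound each $\|P_j\|$ by $1+\delta_j$ directly.
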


\begin{proof}
Pick a sequence $(\gamma_j)_{j=1}^{\infty}$ with $\gamma_j>0$ and $\prod\limits_{j=1}^{\infty}(1+\gamma_j)\leq 1+\gamma$.\\
We proceed by induction on $j\in\mathbb{N}$.\\
Since $X$ is $C$-crudely finitely representable in $Y$, there exists a finite-dimensional subspace $H_1$ of $Y$ so that $d_{BM}(H_1,F_1)\leq C$.\\
Using the standard Mazur technique, we can find a finite-codimensional subspace $Z$ of $Y$ so that
\begin{center}$H_1+Z_1=H_1\oplus Z_1$,\end{center}
and a projection $P_1: H_1\oplus Z_1\rightarrow H_1$ whose kernel is $Z_1$ and which satisfies
\begin{center}$\|P_1\|\leq 1+\gamma_1$.\end{center}
Assume now that, for some integer $j\in\mathbb{N}$, there exists $H_1,\cdots,H_j$ so that

\hspace{2.5cm}$d_{BM}(H_i,F_i)\leq C$ for any $i\in\{1,\cdots,j\}$,

\hspace{2.5cm}$H_1+\cdots+H_j=H_1\oplus\cdots\oplus H_j$,

\hspace{1.5cm} and $(H_1+\cdots+H_j)+Z_j=(H_1+\cdots+H_j)\oplus Z_j$,\\
with $P_j: (H_1+\cdots+H_j)\oplus Z_j \rightarrow H_1+\cdots+H_j$ a projection whose kernel is $Z_j$ and which satisfies
\begin{center}$\|P_j\|\leq\prod\limits_{i=1}^j(1+\gamma_i)$.\end{center}
Then, using the standard Mazur technique, we can find a subspace $Z_{j+1}$ of $Z_j$ which is a finite-codimensional subspace of $Y$, and
\begin{center}$\forall~y\in H_1+\cdots+H_j~~\forall~z\in Z_{j+1},~\|y\|\leq (1+\gamma_{j+1})\|y+z\|$.\end{center}
Since $X$ is $C$-crudely finitely representable in $Z_j$, there exists a finite-codimensional subspace $H_{j+1}$ of $Z_{j+1}$ so that $d_{BM}(H_{j+1},F_{j+1})\leq C$.\\
It is then immediate to conclude that the sequence $(H_j)_{j=1}^{\infty}$ satisfies the desired property.
\end{proof}

${}$

We can now prove our main result, Theorem \ref{P872}.

\begin{proof}
Let $A \in(0,\infty)$.
We adapt the proof of the Theorem $2.7$ of \cite{BL}.\\
Let $B_k=\{x \in M,~\|x\| \leq 2^{k+1}\}$ for $k \in \mathbb{Z}$.\\
Since $M$ is a proper subset of $X$, $B_k$ is a compact subset of $X$.\\
Let $G_{k,n}$ be a $\varepsilon_{k,n}$-net of $B_k$ containing $0$ (we will specify $\varepsilon_{k,n}$ later) , where $n \in \mathbb{N}$.\\
Let $\varphi_{k,n}$ be a map from $B_k$ to $G_{k,n}$ such that
$$\forall~x \in B_k,~~\|x-\varphi_{k,n}(x)\|=d(x,G_{k,n}).$$
Thereby:
$$\forall~x,y \in B_k,~ \|x-y\|-2\varepsilon_{k,n} \leq \|\varphi_{k,n}(x)-\varphi_{k,n}(y )\| \leq \|x-y\|+2\varepsilon_{k,n}.$$
By the previous lemma, we can construct a family of finite dimensional subspaces $(H_j)_{j=1}^{\infty}$ of $Y$ and linear isomorphisms $R_{k,j}: \textnormal{Span}(G_{k,j}) \rightarrow H_j$ such that $\|R_{k,j}\| \leq 1$ and $\|R_{k,j}^{-1}\| \leq C$, with $(H_j)_{j=1}^{\infty}$ FDD of $Z=\overline{\textnormal{Span}}(\bigcup_{j=1}^{\infty}H_j)$.\\
Let $\gamma > 0$. We will specify later the choice of $\gamma$.
We denote $P_n$ the projection of $Z$ on $H_1\oplus\cdots\oplus H_n$, and
using the previous lemma, we can assume that $\|P_n\| \leq 1+\gamma$ for any $n \in \mathbb{N}$.
We put, for $n \in \mathbb{N}$, $f_{k,n}=R_{k,n}\circ \varphi_{k,n}$. So:
\begin{center}$\forall~x,y \in B_k,~\frac{1}{C}(\|x-y\|-2\varepsilon_{k,n}) \leq \|f_{k,n}(x) -f_{k,n}(y)\| \leq \|x-y\|+2\varepsilon_{k,n}.$\end{center}
We define:

\hspace{2cm} $f_k : B_k \rightarrow \displaystyle\sum\limits_{n \geq 1}^{}H_n$

\hspace{2.9cm} $x \mapsto \displaystyle\sum\limits_{n=1}^{\infty}2^{-n}f_{k,n}(x)$\\
Let $\mu: (0,+\infty) \rightarrow (-\infty,0)$ be an increasing surjective continuous function, and $\sigma: (-\infty,0) \rightarrow (0,+\infty)$ the function defined by $\sigma(y) = \textnormal{inf}\{x \in (0,+\infty): \mu(x) \geq y\}$, with the convention inf~$ \emptyset = \infty$.\\
There exists $B\in [1,\infty)$ such that $\dfrac{1}{B}\leq A$.\\
We put $\varepsilon_{k,n}=\dfrac{1}{2B}\textnormal{min}(2^{-k},2^k)\times\textnormal{min}\left(\frac{\sigma(-n)}{\eta},1\right)$ where $\eta>2$ will be determined later. So:
\begin{align*}\left\|\displaystyle\sum\limits_{n=1}^{\infty}2^{-n}(f_{k,n}(x)-f_{k,n}(y))\right\| &\leq \left(\displaystyle\sum\limits_{n=1}^{\infty}2^{-n}\right)\|x-y\| + \dfrac{1}{B}\textnormal{min}(2^{-k},2^k)\displaystyle\sum\limits_{n=1}^{\infty}\left[2^{-n}\textnormal{min}\left(\frac{\sigma(-n)}{\eta},1\right)\right]\\
&\leq \|x-y\|+\dfrac{1}{B}\textnormal{min}(2^{-k},2^k).
\end{align*}

Thus: $\|f_k(x)-f_k(y)\| \leq \|x-y\|+\dfrac{1}{B}\textnormal{min}(2^{-k},2^k)$.

We define:

\hspace{2cm} $f: M \rightarrow Y$

\hspace{2.7cm} $x \mapsto \lambda_xf_k(x)+(1-\lambda_x)f_{k+1}(x)$,~ if $2^k \leq \|x\| \leq 2^{k+1}$ where $k \in \mathbb{Z}$,\\
with $\lambda_x = \frac{2^{k+1}-\|x\|}{2^k}$.\\
The rest of the proof will be divided into two parts.\\
In the first part, we estimate $\|f(x)-f(y)\|$ from above. The proof requires the inequality obtained for $f_k$, the definition of $f$ in terms of the dyadic slicing and the triangle inequality.\\
In the second part, an estimate of the compression function of $f$ is given.

${}$

\noindent \textbf{Part 1. Upper bound}\\
Let $x,y \in M$. Assume that $\|x\| \leq \|y\|$. Note that $f(0)=0$.

${}$

\underline{Case 1.} If $\|x\| \leq \frac{1}{2}\|y\|$ and $2^k\leq \|y\|<2^{k+1}$, where $k \in \mathbb{Z}$, then
\begin{align*}
\|f(x)-f(y)\| &\leq \|f(x)-f(0)\|+\|f(y)-f(0)\| \\
              &\leq \|x\| + \|y\|+2.2^{k+1} \\
              &\leq \frac{3}{2}\|y\|+4\|y\|\\
              &\leq 11(\|y\|-\|x\|)\\
              &\leq 11\|x-y\|.
\end{align*}

${}$

\underline{Case 2.} $\frac{1}{2}\|y\|<\|x\|\leq\|y\|$.\\
We consider two sub-cases.

${}$

\underline{Case 2.a.} $2^k\leq\|x\|\leq\|y\|<2^{k+1}$, where $k \in \mathbb{Z}$.\\
For $\lambda_x=\frac{2^{k+1}-\|x\|}{2^k}$ and $\lambda_y=\frac{2^{k+1}-\|y\|}{2^k}$, we have $|\lambda_x-\lambda_y|=\frac{\|y\|-\|x\|}{2^k}\leq\frac{\|x-y\|}{2^k}.$
Therefore
\begin{align*}
\|f(x)-f(y)\|&=\|\lambda_xf_k(x)-\lambda_yf_k(y)+(1-\lambda_x)f_{k+1}(x)-(1-\lambda_y)f_{k+1}(y)\|\\
             &\leq\lambda_x\|f_k(x)-f_k(y)\|+(1-\lambda_x)\|f_{k+1}(x)-f_{k+1}(y)\|+2|\lambda_x-\lambda_y|(\|y\|+2^{k+1})\\
             &\leq\lambda_x\left(\|x-y\|+\dfrac{1}{B}\right)+(1-\lambda_x)\left(\|x-y\|+\dfrac{1}{B}\right)+(2^{k+2}+2^{k+2})\dfrac{\|x-y\|}{2^k}\\
             &\leq 9\|x-y\|+\dfrac{1}{B}\\
             &\leq 9\|x-y\|+A.
\end{align*}

${}$

\underline{Case 2.b.} $2^k \leq \|x\| < 2^{k+1} \leq \|y\| < 2^{k+2}$, where $k \in \mathbb{Z}$.\\
Let $\lambda_x = \dfrac{2^{k+1}-\|x\|}{2^k}$ and $\lambda_y=\dfrac{2^{k+2}-\|y\|}{2^{k+1}}$.\\
We have $\lambda_x \leq \dfrac{\|x-y\|}{2^k}$, and so $\lambda_x\|x\| \leq 2\|x-y\|$.\\
In the same way: $1-\lambda_y = \frac{\|y\|-2^{k+1}}{2^{k+1}}\leq\frac{\|x-y\|}{2^{k+1}}$, and $(1-\lambda_y)\|y\|\leq 2\|x-y\|$.\\
It follows that:
\begin{align*}
\|f(x)-f(y)\|&=\|\lambda_xf_k(x)+(1-\lambda_x)f_{k+1}(x)-\lambda_yf_{k+1}(y)-(1-\lambda_y)f_{k+2}(y)\|\\
             &\leq \lambda_x(\|f_k(x)\|+\|f_{k+1}(x)\|)+(1-\lambda_y)(\|f_{k+1}(y)\|+\|f_{k+2}(y)\|)\\
             &{}\hspace{7cm}+\|f_{k+1}(x)-f_{k+1}(y)\|\\
             &\leq\lambda_x(2\|x\|+2^{k+2})+(1-\lambda_y)(2\|y\|+2^{k+3})+\|x-y\|+\dfrac{1}{B}\textnormal{min}(2^{-(k+1)},2^{k+1})\\
             &\leq 6\lambda_x\|x\|+6(1-\lambda_y)\|y\|+\|x-y\|+\dfrac{1}{B}\\
             &\leq 12\|x-y\|+12\|x-y\|+\|x-y\|+\dfrac{1}{B}\\
             &\leq 25\|x-y\|+A.
\end{align*}

${}$

\noindent \textbf{Part 2. Lower bound}\\
Let $x,y \in M$.\\
Assume that $\|x\|\leq\|y\|$.\\
We put, for $k \in \mathbb{Z}$ and $n \in \mathbb{N}$, $Q_{k,n}=R_{k,n}^{-1}\circ \Pi_n$, where $\Pi_1 = P_1$, and $\Pi_n=P_n-P_{n-1}$ when $n$ is greater than or equal to $2$.\\
For the sequel, we also assume that $2^k\leq \|x\| < 2^{k+1}$ and $2^l\leq \|y\| < 2^{l+1}$, where $(k,l)\in \mathbb{Z}^2$.\\
\textbullet~ Assume first that $\|x-y\|<\sigma(-1)$.\\
Let then $n \in \mathbb{N}$ such $$\sigma(-(n+1))\leq \|x-y\|<\sigma(-n),$$
in other words $$-(n+1)\leq \mu(\|x-y\|)<-n.$$
Then:
\begin{align*}
Q_{k,n+1}(f(x))&= 2^{-(n+1)}\lambda_x\varphi_{k,n+1}(x),\\
Q_{k+1,n+1}(f(x))&= 2^{-(n+1)}(1-\lambda_x)\varphi_{k+1,n+1}(x),\\
Q_{l,n+1}(f(y))&= 2^{-(n+1)}\lambda_y\varphi_{l,n+1}(y),\\
Q_{l+1,n+1}(f(y))&= 2^{-(n+1)}(1-\lambda_y)\varphi_{l+1,n+1}(y),\\
\textnormal{and } Q_{r,n+1}(f(x))&= Q_{s,n+1}(f(y))=0 \textnormal{ for } r \notin \{k,k+1\},~ s \notin \{l,l+1\}.
\end{align*}
Therefore:
\begin{align*}
(Q_{r_1,n+1}+\cdots+Q_{r_s,n+1})(f(x)-f(y))=& 2^{-(n+1)}[\lambda_x\varphi_{k,n+1}(x)+(1-\lambda_x)\varphi_{k+1,n+1}(x)\\
                                            & -\lambda_y\varphi_{l,n+1}(y)-(1-\lambda_y)\varphi_{l+1,n+1}(y)]
\end{align*}
with $s \in \{2,3,4\}$ and $r_1,\cdots,r_s \in \{k,k+1,l,l+1\}$.\\
We will now use the following inequality:
$$\underset{r \in \{k,k+1,l,l+1\}}{\textnormal{max}}\|\varphi_{r,n+1}(x)-x\|\leq\frac{\sigma(-(n+1))}{\eta}.$$
This allows us to get:
\begin{align*}
\|(Q_{r_1,n+1}+\cdots+Q_{r_s,n+1})(f(x)-f(y))\| &\geq 2^{-(n+1)}\left(\|x-y\|-\frac{2\sigma(-(n+1))}{\eta}\right) \\
&\geq 2^{-(n+1)}\frac{\eta-2}{\eta}\|x-y\|.
\end{align*}
As $\|Q_{r_1,n+1}+\cdots+Q_{r_s,n+1}\|\leq 8C(1+\gamma)$, we deduce
$$\|f(x)-f(y)\|\geq \frac{2^{-(n+1)}(\eta-2)\|x-y\|}{8\eta C(1+\gamma)} \geq\frac{2^{\mu(\|x-y\|)}(\eta-2)\|x-y\|}{16\eta C(1+\gamma)}.$$
\textbullet~ Assume now that $\|x-y\| \geq \sigma(-1)$ or equivalently that $-1 \leq \mu(\|x-y\|)$.\\
So, using the maps $Q_{k,1},Q_{k+1,1},Q_{l,1}$ and $Q_{l+1,1}$ instead of $Q_{k,n+1},Q_{k+1,n+1},Q_{l,n+1}$, and $Q_{l+1,n+1}$, we obtain that
$$\|f(x)-f(y)\|\geq\frac{(\eta-2)\|x-y\|}{16\eta C(1+\gamma)}.$$
Since $\mu \leq 0$, it follows that
$$\|f(x)-f(y)\|\geq\frac{2^{\mu(\|x-y\|)}(\eta-2)\|x-y\|}{16\eta C(1+\gamma)}.$$
Since $\gamma$ can be chosen as small and $\eta$ as large as desired, thanks to Lemma \ref{L123}, this concludes our proof.
\end{proof}

\begin{Corollary} Let $X$ and $Y$ be two Banach spaces.\\
If $X$ is c.f.r in the finite-codimensional subspaces of $Y$, then for any $Z \in\textnormal{cof}(Y)$, for any proper subset $M$ of $X$, $M$ almost bi-Lipschitz embeds into $Z$.
\end{Corollary}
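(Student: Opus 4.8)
The plan is to obtain the Corollary as an immediate consequence of Theorem~\ref{P872}; the only thing to check is that the hypothesis imposed on $Y$ passes unchanged to an arbitrary $Z\in\textnormal{cof}(Y)$. So I would fix such a $Z$ and let $C>1$ be the uniform constant witnessing that $X$ is f.c.r.\ in the finite-codimensional subspaces of $Y$.

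First I would record the transitivity of finite codimension: if $W$ is a finite-codimensional subspace of $Z$, then, since $Z$ is itself finite-codimensional in $Y$, the subspace $W$ is finite-codimensional in $Y$. Hence $\textnormal{cof}(Z)\subseteq\textnormal{cof}(Y)$. Because $X$ is $C$-f.c.r.\ in every member of $\textnormal{cof}(Y)$ by hypothesis, it is in particular $C$-f.c.r.\ in every $W\in\textnormal{cof}(Z)$, and crucially with the \emph{same} constant $C$. This is exactly the statement that $X$ is f.c.r.\ in the finite-codimensional subspaces of $Z$.

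Having verified this, I would apply Theorem~\ref{P872} verbatim, now with $Z$ in the role of $Y$: it yields that every proper subset $M$ of $X$ almost bi-Lipschitz embeds into $Z$, which is precisely the assertion. There is essentially no obstacle here; the one feature that makes the argument go through is the uniformity of $C$ over the whole family of finite-codimensional subspaces — the strengthening emphasized in the Remark following the definition of f.c.r.\ in finite-codimensional subspaces — since that uniformity is automatically inherited by the smaller family $\textnormal{cof}(Z)$.
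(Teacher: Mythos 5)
Your proposal is correct and is exactly the argument the paper intends (the Corollary is stated without proof as an immediate consequence of Theorem~\ref{P872}): since codimensions add, $\textnormal{cof}(Z)\subseteq\textnormal{cof}(Y)$, so the uniform constant $C$ is inherited and the Theorem applies with $Z$ in place of $Y$. Your emphasis on the uniformity of $C$ being preserved is precisely the point that makes this immediate.
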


\begin{Remark}
Corollary 2.8 from \cite{BL} is a special case of this last result. This corollary says that, for $M$ a proper subset of $L_p$ ($p \in [1,\infty]$), and $X$ a Banach space containing uniformly the $\ell_p^n$'s, $M$ almost bi-Lipschitz embeds into $X$. Indeed, the following assertions are equivalent, for an infinite-dimensional Banach space $X$ and $p \in [1,\infty]$:\\
$(i)$~ $X$ contains the $\ell_p^n$'s uniformly.\\
$(ii)$~ $L_p$ is finitely representable in $X$.\\
$(iii)$~ $L_p$ is c.f.r in the finite-codimensional subspaces of $X$.\\
A proof is given in \cite{Ne}.
\end{Remark}

\begin{Definition}
A Banach space $X$ is said to be \textit{locally minimal} if there exists a constant $K>1$ such that $X$ is $K$-c.f.r in each of its infinite-dimensional subspaces.
\end{Definition}

\begin{Examples}
$c_0$, $\ell_p$ for $1\leq p<\infty$, and the Tsirelson space $T^*$ are such spaces.
\end{Examples}

\noindent With this last definition, the following corollary is an obvious consequence of Theorem \ref{P872}:

\begin{Corollary}
Let $X$ and $Y$ be two Banach spaces such that $Y$ is locally minimal, and $X$ is finitely representable in $Y$.\\
Let $M$ be a proper subset of $X$.
Then $M$ almost bi-Lipschitz embeds into $Y$.
\end{Corollary}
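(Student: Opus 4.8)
The plan is to reduce the statement to Theorem \ref{P872} by checking that its hypothesis---that $X$ is f.c.r in the finite-codimensional subspaces of $Y$---is implied by the two assumptions here, local minimality of $Y$ and finite representability of $X$ in $Y$. The engine of the argument is the elementary fact that crude finite representability is transitive, with the constants multiplying.

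First I would fix a constant $K>1$ witnessing that $Y$ is locally minimal, and observe that any $Z \in \textnormal{cof}(Y)$ is in particular an infinite-dimensional subspace of $Y$; hence $Y$ is $K$-f.c.r in $Z$, with $K$ independent of the choice of $Z$. Next, since $X$ is finitely representable in $Y$, I would fix once and for all a convenient value, say that $X$ is $2$-f.c.r in $Y$ (any fixed $(1+\varepsilon)$ would serve equally well).

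The central step is the composition. Given a finite-dimensional subspace $E \subseteq X$, finite representability supplies a linear isomorphism $T : E \to T(E) \subseteq Y$ with $\|T\|\,\|T^{-1}\| < 2$. As $T(E)$ is finite-dimensional in $Y$ and $Y$ is $K$-f.c.r in $Z$, I would then produce an isomorphism $S : T(E) \to S(T(E)) \subseteq Z$ with $\|S\|\,\|S^{-1}\| < K$. The composite $S\circ T : E \to Z$ satisfies
$$\|S\circ T\|\,\|(S\circ T)^{-1}\| \le \big(\|T\|\,\|T^{-1}\|\big)\big(\|S\|\,\|S^{-1}\|\big) < 2K,$$
so $X$ is $2K$-f.c.r in $Z$. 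Since neither $2$ nor $K$ depends on $Z$, the constant $C = 2K$ is uniform across all $Z \in \textnormal{cof}(Y)$, which is precisely the definition of $X$ being f.c.r in the finite-codimensional subspaces of $Y$.

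With this hypothesis established, Theorem \ref{P872} applies directly to any proper subset $M$ of $X$ and yields the desired almost bi-Lipschitz embedding of $M$ into $Y$. The only point requiring attention is the uniformity of the constant: the entire reduction hinges on local minimality furnishing a \emph{single} $K$ valid for every infinite-dimensional subspace, so that the composed constant does not blow up as $Z$ ranges over $\textnormal{cof}(Y)$. This is immediate from the definition of local minimality, so I expect no genuine obstacle---the corollary is, as claimed, essentially a formal consequence of the main theorem.
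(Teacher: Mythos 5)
Your proposal is correct and follows exactly the route the paper intends: the paper states this corollary as an ``obvious consequence'' of Theorem \ref{P872} without writing out details, and your argument---that local minimality gives a uniform constant $K$ on all $Z \in \textnormal{cof}(Y)$, which composes with the $(1+\varepsilon)$-f.c.r.\ of $X$ in $Y$ to make $X$ uniformly $2K$-f.c.r.\ in every finite-codimensional subspace of $Y$---is precisely the reduction being taken for granted. Your attention to the uniformity of the constant across all $Z$ is the right point to emphasize, since that uniformity is exactly what the hypothesis of Theorem \ref{P872} requires.
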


%\noindent \textbf{Acknowledgments} The author is grateful to Florent Baudier and Gilles Godefroy for helpful discussions.

%\noindent \textbf{Author Contributions} The author conceived the results in this paper, prepared the manuscript, and reviewed and approved the final version thereof.

%\noindent \textbf{Funding} The author was not supported during the preparation and the writing of this manuscript.

%\noindent \textbf{Conflict of interest} The author declare that he has no conflict of interest.

\end{document}